\newcounter{minutes}\setcounter{minutes}{\time}
\newcounter{hours}\setcounter{hours}{\time}
\title[Zeros of a cross-product]{Zeros of a cross-product of the Coulomb wave and Tricomi hypergeometric functions}
\author[\'A. Baricz]{\'Arp\'ad Baricz}
\address{Institute of Applied Mathematics, \'Obuda University, Budapest, Hungary}
\address{Department of Economics, Babe\c{s}-Bolyai University, Cluj-Napoca, Romania}
\email{bariczocsi@yahoo.com}
\thanks{$^{\bigstar}$The research of \'A. Baricz was supported by the J\'anos Bolyai Research Scholarship of the Hungarian Academy of Sciences. The author is very grateful to Prof. Mourad E.H. Ismail for suggesting the investigation of the zeros discussed in this paper of the cross-product of the regular Coulomb wave function and Tricomi hypergeometric function, and also for his kind hospitality during the author's visit at Department of Mathematics of the City University of Hong Kong in September 2011. The author is also very grateful to his friend and colleague Prof. Alexandru Krist\'aly for the discussions and suggestions about the boundary value problems considered in this paper.}
\newtheorem{theorem}{Theorem}
\keywords{Coulomb wave function; Tricomi hypergeometric function; boundary value problem; zeros of a cross-product; eigenvalues; eigenfunctions; Bessel and modified Bessel functions; monotonicity of the zeros.} \subjclass[2010]{34B09, 34B30, 33C15, 33C10.}
\begin{document}

\def\thefootnote{}
\footnotetext{ \texttt{File:~\jobname .tex,
          printed: \number\year-0\number\month-\number\day,
          \thehours.\ifnum\theminutes<10{0}\fi\theminutes}
} \makeatletter\def\thefootnote{\@arabic\c@footnote}\makeatother

\maketitle
%=======================================================================================================================================================

\begin{center}
Dedicated to Prof. P\'eter T. Nagy on the occasion of his
70th birthday
\end{center}

\begin{abstract}
Motivated by a problem related to conditions for the existence of clines in genetics,
in this note our aim is to show that the positive zeros of a cross-product
of the regular Coulomb wave function and the Tricomi hypergeometric function
are increasing with respect to the order. In particular, this implies that
the eigenvalues of a boundary value problem are increasing with the dimension.
\end{abstract}

%=======================================================================================================================================================

\section{Introduction}

In his study about the existence of clines in genetics, Nagylaki \cite{nagy} considered
a partial differential equation in space and time satisfied by the gene frequency in a
monoecious population distributed continuously over an arbitrary habitat. Nagylaki \cite{nagy}
showed that this partial differential equation reduces to the simplest multidimensional
generalization of the classical Fisher-Haldane
cline model, and he investigated the efficacy of migration and selection in maintaining genetic
variability at equilibrium in this model by deducing conditions
for the existence of clines under various circumstances. The boundary value problem
considered by Nagylaki reads as follows
\begin{equation}\label{eqnagy}
\Delta p+\lambda^2g(r)p=0,
\end{equation}
where $p'(0)=0$ and $p(\infty)<\infty,$ $\Delta p$ is the $n$-dimensional Laplacian,
$r$ is the distance from the origin of an $n$-dimensional vector $x,$ and
$$g(r)=\left\{\begin{array}{ll}1,& r\in[0,1]\\-\alpha^2,& r>1\end{array}\right. .$$
Nagylaki \cite{nagy} conjectured that for each $\alpha>0$ fixed the smallest positive eigenvalues
of the above boundary value problem increase with the dimension. Motivated by Nagylaki's investigation,
Ismail and Muldoon \cite{ismail} considered the radial part of the boundary value problem \eqref{eqnagy}, that is,
\begin{equation}\label{eqism}
-(ry'(r))'+\nu^2r^{-1}y(r)=\lambda^2rg(r)y(r),
\end{equation}
where $p'(0)=0,$ $p(\infty)<\infty,$ $y(r)=r^{\nu}p(r)$ and $\nu={n}/{2}-1,$ and they showed that the positive eigenvalues of \eqref{eqism}
are the positive zeros of fixed rank of the cross-product $$J_{\nu+1}(r)K_{\nu}(\alpha r)-\alpha K_{\nu+1}(\alpha r)J_{\nu}(r),$$ where $J_{\nu}$
is the Bessel function of the first kind, while $K_{\nu}$ stands for the modified Bessel function of the second kind. Moreover, motivated by Askey's claim, Ismail and Muldoon \cite{ismail} proved that the positive zeros of the cross-product $$J_{\nu+\beta}(r)K_{\nu}(\alpha r)-\alpha^{\beta} K_{\nu+\beta}(\alpha r)J_{\nu}(r)$$
are increasing with respect to $\nu$ on $[-\beta/2,\infty),$ where $\beta\in(0,1].$ Thus, it is clear that Nagylaki's conjecture follows from the case $\beta=1$ of the above result. In \cite{ismail} the authors actually stated more: they showed that the expression $\alpha r$ in the above affirmation can be changed to any strictly increasing differentiable function on $(0,\infty)$ and instead of $\alpha^{\beta}$ it can be taken an arbitrary positive constant. Motivated by the importance of the boundary value problem \eqref{eqnagy} and its radial part \eqref{eqism} in the existence of clines, and by following the suggestion of Ismail, in this note our aim is to show that Nagylaki's claim on the positive eigenvalues will be also true if we consider a more general setting, that is, if we change the Bessel function of the first kind to the regular Coulomb wave function, and the modified Bessel function of the second kind to Tricomi hypergeometric function of the second kind. This is actually a generalization of the problem considered by Nagylaki. For more details on the special functions appearing in this paper we refer to \cite{nist}.

\section{The eigenvalue problem related to Coulomb and Tricomi functions}
\setcounter{equation}{0}

In order to extend Nagylaki's problem we consider the next boundary value problem
\begin{equation}\label{eqnew}
\Delta p+\varphi_{\lambda}(r)r^{-2}p=0,
\end{equation}
where $p'(0)=0$ and $p(\infty)<\infty,$
$$\varphi_{\lambda}(r)=L(L-1)+\lambda^2r^2g(r)-2\eta\lambda rh(r),$$
$L=(n-1)/2,$ $\eta$ is a real parameter, and
$$h(r)=\left\{\begin{array}{ll}1,& r\in[0,1]\\\alpha,& r>1\end{array}\right. .$$
It can be shown that the radial part of the above boundary value problem \eqref{eqnew} is
\begin{equation}\label{radnew}
r^2y''(r)+\left(\lambda^2r^2g(r)-2\eta\lambda rh(r)-L(L+1)\right)y(r)=0,
\end{equation}
where $p'(0)=0,$ $p(\infty)<\infty$ and $y(r)=r^{L}p(r).$ Now, if we suppose that $r\in(0,1],$ then we arrive at
$$r^2y''(r)+\left(\lambda^2r^2-2\eta\lambda r-L(L+1)\right)y(r)=0.$$ By using the change of variable $u=\lambda r$ (and taking $y(r)=z(u)$), the above equation becomes the Coulomb wave equation
$$u^2z''(u)+\left(u^2-2\eta u-L(L+1)\right)z(u)=0.$$
Moreover, when $r>1$ the equation \eqref{radnew} becomes
$$r^2y''(r)-\left(\alpha^2\lambda^2r^2+2\eta\alpha\lambda r-L(L+1)\right)y(r)=0,$$
which after the change of variable $v=\alpha\lambda r$ (and taking $y(r)=q(v)$) becomes a transformation of the Kummer confluent hypergeometric
differential equation
$$v^2q''(v)-\left(v^2+2\eta v+L(L+1)\right)q(v)=0.$$
Thus, when $r\in(0,1]$ the differential equation \eqref{radnew} has as particular solution the regular Coulomb wave function $$y(r)=A\cdot F_{L}(\eta,u),$$ where $A$ is a real constant, while for $r>1$ the equation \eqref{radnew} has the particular solution a transformation of the Tricomi hypergeometric function $$y(r)=B\cdot v^{L+1}e^{-v}\psi(L+\eta+1,2L+2,2v),$$ where $B$ is a real constant. It is important to mention here that when $\eta=0$ the above particular solutions reduce to $$y(r)=A\cdot \sqrt{\frac{\pi}{2u}}J_{L+\frac{1}{2}}(u)\ \ \mbox{and}\ \  y(r)=B\cdot 2^{-L}\sqrt{\frac{2v}{\pi}}K_{L+\frac{1}{2}}(v),$$ which show that the boundary value problem \eqref{eqnew} is a natural extension of \eqref{eqnagy}, while \eqref{radnew} is a natural extension of \eqref{eqism}.

Now, we are ready to state the main result of this paper.

\begin{theorem}\label{the1}
The following assertions are valid:
\begin{enumerate}

\item[\bf a.] The boundary value problem \eqref{radnew} has for its eigenvalues the zeros of the cross-product of regular Coulomb wave and Tricomi hypergeometric functions $$F_{L}'(\eta,r)Q_{L}(\eta,\alpha r)-\alpha Q_{L}'(\eta,\alpha r)F_{L}(\eta,r)$$ and corresponding eigenfunctions
$$r\mapsto \Theta_{L}(\eta,r)=\left\{\begin{array}{ll}Q_{L}(\eta,\alpha\lambda)\cdot F_{L}(\eta,\lambda r),& r\in(0,1]\\
F_{L}(\eta,\lambda)\cdot Q_L(\eta,\alpha\lambda r),& r>1\end{array}\right. ,$$
where $$Q_{L}(\eta,r)=r^{L+1}e^{-r}\psi(L+\eta+1,2L+2,2r).$$

\item[\bf b.] For fixed $\alpha>0,$ $\eta\in\mathbb{R}$ such that $L+\eta>0,$ and $L>-3/2,$ $L\neq-1$ if $\eta\neq0$ and $L>-3/2$ if $\eta=0,$ the equation
\begin{equation}\label{eqtrans}
F_{L}'(\eta,r)/F_{L}(\eta,r)=\alpha Q_{L}'(\eta,\alpha r)/Q_{L}(\eta,\alpha r)
\end{equation}
has infinitely many positive roots, which we denote in increasing order by $\lambda_{L,\eta,\alpha,n},$ $n\in\mathbb{N}.$ These zeros satisfy $$x_{L,\eta,n-1}<\lambda_{L,\eta,\alpha,n}<x_{L,\eta,n},$$ $n\in\{2,3,\dots\},$ where $x_{L,\eta,n}$ stands for the
$n$th positive zero of the Coulomb wave function $\rho\mapsto F_{L}(\eta,\rho).$ Moreover, if $\alpha>0,$ $\eta\in\mathbb{R}$ and $L>-1/2,$ then we have $\lambda_{L,\eta,\alpha,1}<x_{L,\eta,1}.$

\item[\bf c.] For fixed $\alpha>0,$ $\eta\geq0$ and $n\in\mathbb{N}$ the zeros $\lambda_{L,\eta,\alpha,n}$ increase with $L$ on $[0,\infty).$

\end{enumerate}
\end{theorem}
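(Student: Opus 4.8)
The plan is to show that for fixed $\alpha>0$, $\eta\geq0$ and $n\in\mathbb{N}$ the map $L\mapsto\lambda(L):=\lambda_{L,\eta,\alpha,n}$ is continuously differentiable on $[0,\infty)$ with $\lambda'(L)>0$, by deriving a formula for $\lambda'(L)$ of Hellmann--Feynman type. Dividing \eqref{radnew} by $r^2$, write the radial equation as $y''+q(r;\rho,L)\,y=0$ with $q(r;\rho,L)=\rho^2g(r)-2\eta\rho h(r)/r-L(L+1)/r^2$, where $g$ and $h$ jump at $r=1$. For $\rho$ near $\lambda(L)$ let $u(\cdot,\rho)$ be the continuous, piecewise-$C^2$ function equal to $Q_L(\eta,\alpha\rho)F_L(\eta,\rho r)$ on $(0,1]$ and to $F_L(\eta,\rho)Q_L(\eta,\alpha\rho r)$ on $(1,\infty)$; then $u$ solves $u''+q(r;\rho,L)u=0$ off $r=1$ and has derivative jump $u'(1^+,\rho)-u'(1^-,\rho)=-\rho\,W(L,\rho)$, where
\begin{equation*}
W(L,\rho)=Q_L(\eta,\alpha\rho)F_L'(\eta,\rho)-\alpha F_L(\eta,\rho)Q_L'(\eta,\alpha\rho)
\end{equation*}
is the cross-product of part (a). Thus $\lambda(L)$ is a positive zero of $W(L,\cdot)$, namely the value of $\rho$ for which the jump disappears and $u(\cdot,\lambda)=\Theta_L$ becomes the eigenfunction of part (a). Before differentiating I would record that $F_L(\eta,\rho)\sim c_L(\eta)\,\rho^{L+1}$ as $\rho\to0^+$ and that $Q_L(\eta,\rho)>0$ on $(0,\infty)$ and decays exponentially as $\rho\to\infty$ (positivity and the large-argument behaviour come from the integral representation of $\psi(a,b,z)$ with $a=L+\eta+1>0$); together with part (b), which gives $F_L(\eta,\lambda)\neq0$, this shows $\Theta_L(1)=Q_L(\eta,\alpha\lambda)F_L(\eta,\lambda)\neq0$ and ensures that the boundary terms at $0$ and at $\infty$ appearing below vanish when $L\geq0$.

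Next I would differentiate the relations $u''+q(r;\rho,L)u=0$ and $u'(1^+,\rho)-u'(1^-,\rho)=-\rho W(L,\rho)$ once with respect to $\rho$ and once with respect to $L$, apply Lagrange's identity and integrate over $(0,1)\cup(1,\infty)$ (with the relevant partial derivative of $u$ playing the auxiliary role), and collect the two interface contributions at $r=1$ using that $u$ and its parameter-derivative are continuous there. Setting $\rho=\lambda$, where $W(L,\lambda)=0$, this produces
\begin{equation*}
\lambda\,\partial_\rho W(L,\lambda)\,\Theta_L(1)=-\int_0^\infty q_\rho(r;\lambda,L)\,\Theta_L(r)^2\,dr,\qquad \lambda\,\partial_L W(L,\lambda)\,\Theta_L(1)=\int_0^\infty\frac{2L+1}{r^2}\,\Theta_L(r)^2\,dr,
\end{equation*}
with $q_\rho(r;\lambda,L)=2\lambda g(r)-2\eta h(r)/r$. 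Dividing these and using $\lambda'(L)=-\partial_L W(L,\lambda)/\partial_\rho W(L,\lambda)$ (the factors $\lambda\,\Theta_L(1)$ cancel, regardless of their signs) one arrives at
\begin{equation*}
\lambda'(L)=\frac{\displaystyle\int_0^\infty\frac{2L+1}{r^2}\,\Theta_L(r)^2\,dr}{\displaystyle\int_0^\infty q_\rho(r;\lambda,L)\,\Theta_L(r)^2\,dr},
\end{equation*}
valid once the denominator is known to be nonzero. The numerator is strictly positive for $L\geq0$, so everything reduces to the sign of the denominator.

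This is the main obstacle: the weight $g$ is indefinite — positive on $(0,1]$, negative on $(1,\infty)$ — so the sign of $\int_0^\infty q_\rho\,\Theta_L^2$ is not evident. The resolution I would use is the elementary identity $\lambda\,q_\rho(r;\lambda,L)=2\,q(r;\lambda,L)+2L(L+1)/r^2+2\eta\lambda\,h(r)/r$, read off from the definitions of $q$ and $q_\rho$, together with the integration-by-parts identity $\int_0^\infty q(r;\lambda,L)\,\Theta_L(r)^2\,dr=\int_0^\infty\Theta_L'(r)^2\,dr$, obtained by multiplying $\Theta_L''+q\Theta_L=0$ by $\Theta_L$ and integrating over $(0,1)\cup(1,\infty)$, the single interface term dropping out precisely because $\Theta_L$ is $C^1$ at $r=1$ at an eigenvalue. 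Combining these,
\begin{equation*}
\lambda\int_0^\infty q_\rho(r;\lambda,L)\,\Theta_L(r)^2\,dr=2\int_0^\infty\Theta_L'(r)^2\,dr+2L(L+1)\int_0^\infty\frac{\Theta_L(r)^2}{r^2}\,dr+2\eta\lambda\int_0^\infty\frac{h(r)}{r}\,\Theta_L(r)^2\,dr,
\end{equation*}
and every term on the right is nonnegative, the first strictly positive, since $L\geq0$, $\eta\geq0$, $\lambda>0$ and $h>0$ — exactly the extra hypotheses of part (c). As $\lambda>0$, the denominator is positive; hence $\partial_\rho W(L,\lambda)\neq0$, so $\lambda(L)$ is a simple zero of $W(L,\cdot)$, the implicit function theorem applies (using part (b) to pick out the $n$-th branch), and $\lambda'(L)>0$ throughout $[0,\infty)$. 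The remaining, routine, points are the decay/boundary estimates that make the boundary terms at $0$ and $\infty$ vanish, and the joint smoothness of $F_L(\eta,\cdot)$ and $Q_L(\eta,\cdot)$ in their parameters, which legitimizes the differentiations.
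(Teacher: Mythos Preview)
Your proof is correct and follows essentially the same Hellmann--Feynman approach as the paper: both arrive at the identity
\[
\lambda'(L)\Bigl(2\lambda\int_0^\infty g\,\Theta_L^2-2\eta\int_0^\infty r^{-1}h\,\Theta_L^2\Bigr)=(2L+1)\int_0^\infty r^{-2}\Theta_L^2
\]
and show the bracketed factor is positive via the same energy identity $\int_0^\infty q\,\Theta_L^2=\int_0^\infty(\Theta_L')^2$ (your relation $\lambda q_\rho=2q+2L(L+1)r^{-2}+2\eta\lambda h r^{-1}$ is just a rearrangement of what the paper uses). The only difference is packaging: the paper cross-multiplies the eigenvalue equations at parameters $L$ and $M$, subtracts, integrates, and lets $M\to L$, whereas you encode the interface mismatch as the derivative jump $-\rho\,W(L,\rho)$ and invoke the implicit function theorem---a minor advantage being that simplicity of $\lambda(L)$ as a zero of $W$ and hence smoothness of $L\mapsto\lambda(L)$ come out explicitly rather than being tacitly assumed.
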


We note that since the boundary value problem \eqref{eqnew} is an extension of \eqref{eqnagy}, while \eqref{radnew} is an extension of \eqref{eqism},
if we take $\eta=0$ in the above theorem, then we obtain some of the main results from \cite{ismail} for the case $\beta=1.$ In the proof of our main result we followed the approach considered in \cite{ismail}, namely, the Sturmian-type arguments and the approach of the Hellman-Feynman theorem of quantum chemistry. Moreover, we used some recent results on regular Coulomb wave and Tricomi hypergeometric functions: the so-called Mittag-Leffler expansion of regular Coulomb wave function (obtained from the infinite product representation, see \cite{coulomb,stampach,wimp} for more details), and a Tur\'an type inequality for Tricomi hypergeometric functions, recently obtained and written in terms of a monotonicity result (see \cite[Remark 3]{baricz}).

\begin{proof}[Proof of Theorem \ref{the1}]
{\bf a.} Subject to the stated boundary condition the differential equation in \eqref{radnew} has solution
$$y(r)=\left\{\begin{array}{ll}A\cdot F_{L}(\eta,\lambda r),& r\in(0,1]\\B\cdot Q_L(\eta,\alpha\lambda r),& r>1\end{array}\right..$$
Since $y$ and $y'$ are to be continuous at $r=1$ we must have $$A\cdot F_{L}(\eta,\lambda)=B\cdot Q_L(\eta,\alpha\lambda)$$ and $$A\cdot \lambda F_{L}'(\eta,\lambda)=B\cdot \alpha\lambda Q_L'(\eta,\alpha\lambda),$$ and there will be a notrivial solution of this system if and only if $$F_{L}'(\eta,\lambda)Q_{L}(\eta,\alpha \lambda)=\alpha Q_{L}'(\eta,\alpha\lambda)F_{L}(\eta,\lambda).$$ In this case we may take $A=Q_L(\eta,\alpha\lambda)$ and $B=F_L(\eta,\lambda).$ Thus, indeed the boundary value problem \eqref{radnew} has for its eigenvalues the zeros of the cross-product of the regular Coulomb wave and Tricomi hypergeometric functions, that is, $$F_{L}'(\eta,r)Q_{L}(\eta,\alpha r)-\alpha Q_{L}'(\eta,\alpha r)F_{L}(\eta,r)$$ and corresponding eigenfunctions
$r\mapsto\Theta_{L}(\eta,r).$

{\bf b.} The equation \eqref{eqtrans} is equivalent to
\begin{equation}\label{eqtrans2}\frac{F_{L}'(\eta,r)}{F_{L}(\eta,r)}-\frac{L+1}{r}=\alpha+\frac{2\alpha\,\psi'(L+\eta+1,2l+2,2\alpha r)}{\psi(L+\eta+1,2L+2,2\alpha r)}.\end{equation}
Since for $L>-3/2,$ $L\neq-1$ if $\eta\neq0$ and $L>-3/2$ if $\eta=0$ we have (see \cite[Lemma 1]{coulomb} for more details) $$\frac{F_{L}'(\eta,r)}{F_{L}(\eta,r)}-\frac{L+1}{r}=\frac{\eta}{L+1}-\sum_{n\geq1}\left(\frac{r}{x_{L,\eta,n}(x_{L,\eta,n}-r)}+
\frac{r}{y_{L,\eta,n}(y_{L,\eta,n}-r)}\right),$$
the left-hand side of the equation \eqref{eqtrans2} is decreasing on $(0,x_{L,\eta,1})$ and also on each interval $(x_{L,\eta,n},x_{L,\eta,n+1}),$ $n\in\mathbb{N}.$ Here $y_{L,\eta,n}$ stands for the $n$th negative zero of the regular Coulomb wave function $r\mapsto F_{L}(\eta,r).$ When $r\searrow0$ the left-hand side of \eqref{eqtrans2} tends to $\eta/(L+1),$ when $r\nearrow x_{L,\eta,n},$ $n\in\mathbb{N}$ it tends
to $-\infty$ and when $r\searrow x_{L,\eta,n},$ $n\in\mathbb{N}$ it tends
to $+\infty.$ On the other hand, according to \cite[Remark 3]{baricz} we know that for $a>1$ and $c\in\mathbb{R}$ the function $r\mapsto \psi'(a,c,r)/\psi(a,c,r)$ is increasing on $(0,\infty).$ Moreover, by using the recurrence relation
$$\psi'(a,c,r)=-a\psi(a+1,c+1,r),$$
the fact that $\psi(a,c,r)$ is positive for $a,c,r\in\mathbb{R},$ and the asymptotic expansion
$$\psi(a,c,r)=r^{-a}(1+\mathcal{O}(r^{-1}))\ \ \mbox{as}\ \  r\to\infty,$$ it follows that $r\mapsto \psi'(a,c,r)/\psi(a,c,r)$ maps $(0,\infty)$ into
$(-\infty,0).$ Thus, the right-hand side of \eqref{eqtrans2} is increasing on $(0,\infty)$ for $\alpha>0$ and $L+\eta>0$ and maps the interval $(0,\infty)$ into $(-\infty,\alpha).$ These show that the equation \eqref{eqtrans2} has indeed infinitely many positive roots, and starting from the second positive root they are certainly located between the positive zeros of the regular Coulomb wave function. Now, by using the asymptotic relation $$\psi(a,c,r)\sim\Gamma(c-1)r^{1-c}/\Gamma(a)\ \ \mbox{as}\ \ r\to0\ \ \mbox{and}\ \ \ c>1,$$ it follows that $\psi'(a,c,r)/\psi(a,c,r)\sim(1-c)/r$ as $r\to0$ and $c>1,$ and thus the right-hand side of \eqref{eqtrans2} tends to $-\infty$ as $r\to0$ and $L>-1/2.$ This shows that indeed if $\alpha>0,$ $\eta\in\mathbb{R}$ and $L>-1/2,$ then we have $\lambda_{L,\eta,\alpha,1}<x_{L,\eta,1}.$

{\bf c.} Since $r\mapsto\Theta_L(\eta,r)$ are eigenfunctions of the boundary value problem \eqref{radnew}, we have
$$-\Theta_L''(\eta,r)\Theta_L(\eta,r)+2\eta\lambda \frac{1}{r}h(r)\Theta_L^2(\eta,r)+L(L+1)\frac{1}{r^2}\Theta_L^2(\eta,r)=\lambda^2g(r)\Theta_L^2(\eta,r).$$
Integrating from zero to infinity we get
\begin{align*}\int_0^{\infty}&\left(\lambda^2g(r)-2\eta\lambda \frac{1}{r}h(r)\right)\Theta_L^2(\eta,r)dr\\&=L(L+1)\int_0^{\infty}\frac{1}{r^2}\Theta_L^2(\eta,r)dr+
\int_0^{\infty}\left(\Theta_L'(\eta,r)\right)^2dr,\end{align*}
where we used integration by parts in the last integral. Since the right-hand side of the above relation is positive for $L\geq0,$ it follows that
\begin{equation}\label{ineqlambda}\lambda^2\int_0^{\infty}g(r)\Theta_L^2(\eta,r)dr\geq \lambda\int_0^{\infty}2\eta\frac{1}{r}h(r)\Theta_L^2(\eta,r)dr.\end{equation}
Now, writing $\lambda_L$ instead of $\lambda_{L,\eta,\alpha,n},$ multiplying the equations
$$-\Theta_L''(\eta,r)+2\eta\lambda_L \frac{1}{r}h(r)\Theta_L(\eta,r)+L(L+1)\frac{1}{r^2}\Theta_L(\eta,r)=\lambda_{L}^2g(r)\Theta_L(\eta,r),$$
$$-\Theta_M''(\eta,r)+2\eta\lambda_M \frac{1}{r}h(r)\Theta_M(\eta,r)+M(M+1)\frac{1}{r^2}\Theta_M(\eta,r)=\lambda_{M}^2g(r)\Theta_M(\eta,r)$$
by $\Theta_M(\eta,r),$ $\Theta_L(\eta,r)$ respectively, subtracting and integrating between $0$ and $\infty$ we get
\begin{align*}&\left(\Theta_M'(\eta,r)\Theta_L(\eta,r)-\Theta_L'(\eta,r)\Theta_M(\eta,r)\right)\Big|_0^{\infty}\\
&+(\lambda_L-\lambda_M)\int_0^{\infty}2\eta\frac{1}{r}h(r)\Theta_L(\eta,r)\Theta_M(\eta,r)dr\\
&+\left[L(L+1)-M(M+1))\right]\int_0^{\infty}\frac{1}{r^2}\Theta_L(\eta,r)\Theta_M(\eta,r)dr\\
&=(\lambda_L^2-\lambda_M^2)\int_0^{\infty}g(r)\Theta_L(\eta,r)\Theta_M(\eta,r)dr.\end{align*}
Note that the integrated term vanishes at $0$ and $\infty$ for $L,M>0,$ and consequently dividing both parts of the above equation by
$L-M$ and taking the limit $M\to L$ we obtain
$$\frac{d\lambda_L}{dL}\int_0^{\infty}2\eta\frac{1}{r}h(r)\Theta_L^2(\eta,r)dr+(2L+1)\int_0^{\infty}\frac{1}{r^2}\Theta_L^2(\eta,r)dr$$
$$=\frac{d\lambda_L^2}{dL}\int_0^{\infty}g(r)\Theta_L^2(\eta,r)dr$$
or equivalently
$$\frac{d\lambda_L}{dL}\int_0^{\infty}2\eta\frac{1}{r}h(r)\Theta_L^2(\eta,r)dr+(2L+1)\int_0^{\infty}\frac{1}{r^2}\Theta_L^2(\eta,r)dr$$
$$=2\lambda_L\frac{d\lambda_L}{dL}\int_0^{\infty}g(r)\Theta_L^2(\eta,r)dr.$$
This implies that
$$\frac{d\lambda_L}{dL}\left(2\lambda_L\int_0^{\infty}g(r)\Theta_L^2(\eta,r)dr-\int_0^{\infty}2\eta\frac{1}{r}h(r)\Theta_L^2(\eta,r)dr\right)$$
$$=(2L+1)\int_0^{\infty}\frac{1}{r^2}\Theta_L^2(\eta,r)dr>0,$$
which in view of \eqref{ineqlambda} and the fact that $\lambda_L$ is positive according to part {\bf b}, yields that $d\lambda_L/dL>0,$ that is, indeed for fixed $\alpha>0$ and $\eta\geq0$ the zero $\lambda_{L}$ is increasing with respect to $L$ on $[0,\infty).$
\end{proof}

\end{document}